\newtheorem{theorem}{Theorem}[section]
\newtheorem{proposition}[theorem]{Proposition}
\newtheorem{remark}[theorem]{Remark}
\def\a{\mathbf a}
\def\b{\mathbf b}
\def\c{\mathbf c}
\def\x{\mathbf x}
\def\z{\mathbf z}
\def\0{\mathbf 0}
\def\cB{\mathcal B}
\def\cC{\mathcal C}
\def\cE{\mathcal E}
\def\cF{\mathcal F}
\def\cI{\mathcal I}
\def\cN{\mathcal N}
\def\cK{\mathcal K}
\def\cO{\mathcal O}
\def\cV{\mathcal V}
\def\cT{\mathcal T}
\def\cU{\mathcal U}
\def\cS{\mathcal S}
\def\cV{\mathcal V}
\def\bD{{\mathbb D}}
\def\bS{{\mathbb S}}
\def\PG{{\rm PG}}
\def\Aut{{\rm Aut}}
\def\Fq{\mathbb F_{q}}
\def\Fqm{\mathbb F_{q^{m}}}
\def\Fqn{\mathbb F_{q^n}}
\def\Ftf{\mathbb F_{3^5}}
\def\Ftd{\mathbb F_{3^d}}
\def\ker{{\rm ker}}
\newcommand\comment[1]{}
\def\<{\langle}
\def\>{\rangle}
\title{Eggs in finite projective spaces \\ and unitals in translation planes}
\author{Giusy Monzillo\footnote{The research was supported by the Italian National Group for Algebraic and Geometric Structures and their Applications (GNSAGA-INdAM). } \\
\small {\tt monzillo.giusy@gmail.com}\\[0.8ex]
\small Dipartimento di Matematica, Informatica ed Economia\\[-0.8ex]
\small Universit\`a degli Studi della Basilicata\\[-0.8ex]
\small Viale dell'Ateneo Lucano 10 \\[-0.8ex]
\small 85100 Potenza, Italy\\
\and   
Tim Penttila \\ 
\small {\tt penttila86@msn.com}\\
\small School of Mathematical Sciences\\[-0.8ex]
\small The University of Adelaide\\[-0.8ex]
\small Adelaide, South Australia \\[-0.8ex]
\small 5005 Australia
\and
Alessandro Siciliano$^*$ \\
\small{\tt alessandro.siciliano@unibas.it}\\[0.8ex]
\small Dipartimento di Matematica, Informatica ed Economia\\[-0.8ex]
\small Universit\`a degli Studi della Basilicata\\[-0.8ex]
\small Viale dell'Ateneo Lucano 10 \\[-0.8ex]
\small 85100 Potenza, Italy\\
}
\date{}
\begin{document}
\date{}

\maketitle


%
\begin{abstract}
Inspired by the connection between ovoids and unitals arising from the Buekenhout construction in the Andr\'e/Bruck-Bose representation of translation planes  of dimension at most  two over their kernel, and since eggs  of $\PG(4m-1,q)$, $m\geq1$, are a generalization of ovoids, we explore the relation between eggs and unitals in translation planes of higher dimension over their kernel. By investigating such a relationship, we construct a unital in the Dickson semifield plane of order $3^{10}$,  which is represented in $\PG(20,3)$ by a cone whose base is a set of points constructed from the dual of the Penttila-Williams egg in $\PG(19,3)$. This unital is not polar; so, up to the knowledge of the authors, it seems to be a new unital in such a plane.
\end{abstract}
 

{\bf Keywords:} Unital, Blocking set, Egg, Projective plane 

%
%

\section{Introduction}\label{sec_1}

Field reduction has become a theme of finite geometry which turned out very fruitful in the last few decades.  Given a construction of an interesting object from a configuration in a vector space of dimension $r$ over a field of order $q^n$, the question is raised as to which objects give rise similar configurations in a vector space of dimension $rn$ over a field of order $q$.

The Buekenhout-Metz construction of unitals in finite translation planes \cite{b,m} (which gives all known unitals in Desarguesian planes) can be recontextualized in this fashion, with  cones projecting  an ovoid as a base in the  Andr\'e/Bruck-Bose representation of such planes. 

It has long been known \cite{bt} that unitals are extremal in size among minimal blocking sets (at the other end than that most studied - large rather than small). The observation of Lunardon \cite{lun} at the turn of the millennium that changing the field gave access to many more subspaces, some of which were blocking sets, transformed the theory of blocking sets in the process giving rise to the idea of linear sets. Thus, the idea of Buekenhout and Metz was taken  by Sz\H onyi et al. \cite{cgmssw} and, later, by Mazzocca and Polverino \cite{mp} to provide further minimal blocking sets, using cones rather than subspaces. 
 
For the construction by Tits of generalized quadrangles from (ovals and) ovoids, the configurations that arise by applying a field reduction  are eggs, and the similar objects are translation generalized quadrangles. 
Thus, changing the field for ovoids and studying eggs gave the possibility of new translation generalized quadrangles, first realized in work of Kantor \cite{kan} from three decades past; the result of field reduction applied to the concept of an ovoid is an egg.

Motivated by the relationship between ovoids and unitals via the Buekenhout-Metz construction, and since eggs are generalization of ovoids, we explore possible relationships between eggs and unitals. Putting all the above ideas together, in this paper we construct a unital in the Dickson semifield plane of order $3^{10}$,  which is represented in $\PG(20,3)$ by a cone whose base is a set of points constructed from the dual of the Penttila-Williams egg in $\PG(19,3)$. This unital does not arise from a polarity; so it is a new unital, up to the knowledge of the authors.

While field reduction is usually thought of in a projective setting, algebraic dimensions are more amenable to an introductory discussion of it, so we will take a vector space approach along all the paper.


\section{Definitions and preliminary results}

A {\em unital} in a finite projective plane $\pi$ of order $n^2$ is a set $\cU$ of $n^3+1$ points such that  every line of $\pi$ meets $\cU$ in 1 or $n+1$ points. Therefore, $\cU$ is equipped with a family of subsets, each of size $n + 1$, such that every pair of distinct points of $\cU$ is contained in exactly one subset of the family; such subsets are usually called {\em blocks}, and $\cU$ turns out to be a 2-$(n^3+1, n+1, 1)$ design.

In a computer search, Brouwer \cite{bro,bro2}  found a large number of  mutually non-isomorphic 2-$(28, 4, 1)$ designs. Only a few of these are embeddable in a projective plane of order 9 as unitals. One of the examples has been generalized by Gr\"uning \cite{gru}, who constructed a unital of order $q$ for any odd prime power in both the Hall plane and dual Hall plane of order $q$. An infinite family of non-Buekenhout unitals in the Hall planes of order $q^2$ have been constructed in \cite{dov}.
Other infinite families of unitals  in various square order planes are known to exist; see  e.g. \cite{alr}, \cite{barlun}, \cite{bar}, \cite{deha},  \cite{rin}, \cite{ros}.  The only known 2-$(n^3+1, n+1, 1)$ design with $n$ not a prime power is the one found in  \cite{baba} and \cite{math} where $n=6$. For more on 2-$(n^3+1, n+1, 1)$ designs embeddable as unitals in projective plane, see \cite{be}.

In the Desarguesian projective plane $\PG(2,q^2)$, a unital can arise from a unitary polarity: the points of the unital are the absolute points, and the blocks are the intersections of the non-absolute lines of the polarity with $\cU$. These unitals are called {\em classical } or {\em Hermitian unitals}. By a result of Seib  \cite{seib}, the absolute points of a unitary polarity in any square order projective plane form the point-set of a unital.  Such  unitals  are called  {\em polar unitals}.  So, classical unitals of $\PG(2,q^2)$ are  examples of  polar unitals, and   Ganley \cite{gan2} showed that  polar unitals exist in any  Dickson  commutative semifield plane of odd order.

 A {\em finite semifield}  is a finite set $\bS$ with two binary operations $+$ and $*
$, such that $(\bS,+)$ is an abelian group and $(\bS\setminus\{0\},*)$  is a loop such that both distributive laws hold.

Let $\pi(\bS)$ be the  point-line geometry  whose 
points are the elements  in $\bS\times\bS$ and   in $\{(m):m\in\bS\cup \{\infty\}\}$, and  the  lines are the sets 
\[
[m,k]=  \{(y,x)\in \bS\times\bS:m*x+y = k	\}\cup\{(m)\},
\]
\[
[z] = \{(y,z):y \in\bS\}\cup\{(\infty)\}
\]
 and 
 \[
 [\infty] =  \{(m):m \in\bS\}\cup \{(\infty)\} .
 \]
 with $m, k,z \in \bS$, and  $\infty$ a symbol not in $\bS$.
 
  It turns out that $\pi(\bS)$ is a translation plane which  is called the {\em semifield plane coordinatized by} $\bS$. We refer to \cite{bjj} and \cite{dem}  for basic  information on semifields and translation planes. 
 
For any semifield $\bS$, the subset $\cN_l=\{a\in\bS:a*(x* y)=(a* x)* y,\forall x,y\in\bS\}$ is called the  {\em left nucleus} of $\bS$. Similarly, the {\em middle nucleus} $\cN_m$ and the {\em right nucleus} $\cN_r$ are defined. The  set  $\cK=\{a\in\cN_l\cap\cN_m\cap\cN_r:a*b=b*a, \forall b\in\bS\}$ is called the {\em center}  of $\bS$. Each of these four structures is a field, and a finite semifield is a left vector space over its left nucleus and a two-sided vector space over its center \cite{dem}. Here,  $\cK$ is isomorphic to the kernel of the translation plane $\pi(\bS)$. 

For any element $b$ of the semifield $\bS$ with center $\cK$, the map $\phi_b:x\in\bS\mapsto xb\in\bS$ is a linear map when $\bS$ is considered over its left nucleus $\cN_l$. It turns out that the set $\cC_{\bS}=\{\phi_b: b\in\bS\}$ is a $\cK$-vector subspace of the vector space of the $\cN_l$-linear maps of $\bS$. Since $\bS$ is finite, we may assume $\cK=\Fq$, $\cN_l=\Fqn$ and $\bS$ is  an $t$-dimensional left vector space over $\Fqn$, for some positive integers $n$ and $t$.   

Under the previous indentification, the set $\cC_{\bS}$ satisfies the following properties: (i) $\cC_{\bS}$ has $q^{nt}$ elements; (ii) $\cC_{\bS}$ contains the zero and the identity maps; (iii) $A-B$ is non-singular for all distinct $A,B\in \cC_{\bS}$. A set of  linear maps of $V(t, q^n)$ satisfying the above properties  is called a \emph{spread set} of $V(t, q^n)$.

A $(t-1)$-spread of the $(r-1)$-dimensional projective space  $\PG(r-1,q)$ over $\Fq$ is a set $\cS$ of $(t-1)$-dimensional projective subspaces such that  every point is contained in exactly one subspace of $\cS$. It is known that a $(t-1)$-spread of $\PG(r-1,q)$ exists if and only if $t$ divides $r$ \cite{dem}.

Let $\cC$ be a spread set of $V(t,q^n)=\Fqn^{\,t}$. In $\PG(2t-1,q^n)$ consider the subspaces 
\[
S_{\tau}=\{((x_1,\ldots,x_t)^\tau,x_1,\ldots,x_t):x_i\in \Fqn\},
\]
for all $\tau\in\cC$. Then, the set $\cS=\{S_{\tau}:\tau\in \cC \} \cup \{S_\infty\}$, with $S_\infty=\{(x_1,\ldots,x_t,0,\ldots,0):x_i\in \Fqn\}$ forms a $(t-1)$-spread of $\PG(2t-1,q^n)$.

Conversely, let $\cS$ be a $(t-1)$-spread of $\PG(2t-1,q^n)$. Then, it is possible to choose homogeneus coordinates in $\PG(2t-1,q^n)$ such that there is a spread set $\cC$ of $V(t, q^n)$  from which $\cS$ is constructed as above. Thanks to the Andr\'e/Bruck-Bose construction, the spread $\cS$ defines a translation plane $\Pi(\cS)$ \cite{andre, bb, bb2}. If the set $\cC$ is closed under the sum, then there is  a (finite) semifield $\bS$ that coordinatizes $\Pi(\cS)$ such that $\cC=\cC_{\bS}$; the  left nucleus of $\bS$ is $\Fqn$ and $\bS$ can be viewed as a $t$-dimensional left vector space over $\Fqn$ \cite{dem}. In addition, if $\mathbb{F}_q$ is the largest subfield $K$ of $\Fqn$ such that $\cC$ is a $K$-vector subspace of the vector space of the $\Fqn$-linear maps of $V(t,q^n)$, the center of $\bS$ is $\mathbb{F}_{q}$. Therefore, there exists a canonical correspondence between translation planes coordinatized over a semifield $\bS$ with dimension $t$ over its left nucleus $\Fqn$ and center $\Fq$, and the $(t-1)$-spreads of $\PG(2t-1,q^n)$ arising from a spread set of $V(t,q^n)$, that is closed under the sum. Moreover, it is well-known that the resulting plane is Desarguesian if and only if $\cS$ is a Desarguesian spread \cite{bb2}.

Buekenhout  \cite{b}, and  Metz \cite{m} (by refining Buekenhout's idea), constructed unitals in any translation planes  with dimension at most two over their kernel by using the Andr\'e/Bruck-Bose representation of such planes. These unitals  are cones of $\PG(4,q)$ projecting  an ovoid in a 3-dimensional subspace of $\PG(4,q)$ from a point at infinity.  
These unitals  are called  {\em Buekenhout-Metz unitals}. Since classical unitals can be obtained in  this way, they fall in the class of  Buekenhout-Metz unitals which, so far, are the only  known unitals of $\PG(2,q^2)$.

Many other authors have used the above representation of $\PG(2, q^n)$ in $\PG(2n, q)$ to study objects in the Desarguesian plane  in order to determine whether
this higher dimensional representation provides additional information about those objects in the plane. In particular, the projective plane $\PG(2,q^4)$, modelled in $\PG(8,q)$, has been considered in \cite{bcq} to study the representation of  classical unitals, and the representation of $\PG(2,q^{2m})$  in $\PG(4m,q)$, for $m>1$, have been considered to study other geometric objects of the plane;  see \cite{mp, mps, rsv, cgmssw} just to cite some.



A {\em blocking set} in a projective plane $\pi$ is a set of points such that every line of $\pi$ has a  non-empty intersection with the set. A blocking set is said to be {\em minimal} if through any of its points there is a line of $\pi$ intersecting it precisely in that point.

In the paper \cite{bt}, Bruen and Thas proved that, when the order of the projective plane is a square, say $n^{2}$, then  the size of a minimal blocking set is bounded by $n^{3}+1$. This size is reached if and only if the minimal blocking set is a unital.  

In \cite{mp} the following geometric setting was introduced to construct large minimal blocking sets of $\PG(2,q^{2m})$ from cones in its Andr\'e/Bruck-Bose representation in  $\PG(4m,q)$.
Let $\z$ be a fixed element of a $(2m-1)$-spread $\cS$ of $\Sigma_{\infty}$ and  $\cV$  an $(m-1)$-dimensional subspace of $\z$. Let  $\Gamma$ be a $(3m-1)$-dimensional subspace of $\Sigma_\infty$ disjoint from $\cV$. For every $\x\in\cS$, $\x\neq \z$,  let $I(\x)$  be the $(2m-1)$-dimensional subspace $\<\x,\cV\>\cap\Gamma$. We denote by  $\cI(\cV)$ the set of all the subspaces $I(\x)$, $\x\in\cS$. 
Let $\Gamma'$ be an  affine $3m$-dimensional subspace of $\PG(4m,q)$ through $\Gamma$, and denote by $\cF(\cV)$ the set of all affine $2m$-dimensional subspaces of $\Gamma'$ containing an element of  $\cI(\cV)$.

Let $\cF$ be a family of $2m$-dimensional subspaces of $\Gamma'$. 
An $\cF$-{\em blocking set} of $\Gamma'$ is a set $\cB$ of affine points such that every element of $\cF$ has a non-empty intersection with $\cB$. The blocking set $\cB$ is said to be {\em minimal} if through any  point of $\cB$ there is an element in $\cF$ intersecting $\cB$ precisely in that point.

By keeping the above geometric setting in mind, the following result, which is a sharpening of Corollary 3.3 in  \cite{mp}, is crucial for our succeeding considerations.

 \begin{proposition}\label{prop_1}
Let  $\cB$ be a set   of affine points of $\Gamma'$  and 
\begin{equation}\label{eq_1}
\cB^*=\bigcup_{ P\in\cB}{\langle \cV,P\rangle}\cup\{\z\}.
\end{equation}
If  $\cB$ is a minimal $\cF(\cV)$-blocking set, then $\cB^*$ is a minimal blocking set of  size $|\cB^*|=q^{m}|\cB|+1$ in the translation plane $\Pi(\cS)$.
\end{proposition}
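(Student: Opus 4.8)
The plan is to realise everything through the projection $\rho$ of $\PG(4m,q)$ from the vertex $\cV$ onto the complementary subspace $\Gamma'$. Since $\cV\cap\Gamma'=\emptyset$ and $\dim\cV+\dim\Gamma'=(m-1)+3m=4m-1$, the space $\Gamma'$ is a complement of $\cV$, so $\rho$ is a well-defined single-valued map on the points off $\cV$; in particular every affine point has a unique image. For an affine point $P\in\Gamma'$ one has $\rho(P)=P$, and the fibre $\rho^{-1}(P)$ is the affine part of the $m$-dimensional cone $\langle\cV,P\rangle$, consisting of exactly $q^{m}$ affine points that meet $\Sigma_\infty$ only in $\cV$. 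This already delivers the cardinality: if an affine point $R$ lies in $\langle\cV,P\rangle\cap\langle\cV,P'\rangle$ with $P,P'\in\cB$, then $P=\rho(R)=P'$ because $\rho$ is a function; hence the cones over distinct points of $\cB$ have pairwise disjoint affine parts, and together with the single point $\z$ at infinity this gives $|\cB^*|=q^{m}|\cB|+1$.

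The heart of the argument is a correspondence I would prove by dimension counting, using $\cV\cap\Gamma=\emptyset$ and the spread property $\x\cap\z=\emptyset$. A line of $\Pi(\cS)$ is either $\ell_\infty=\Sigma_\infty$, a line through $\z$, or an affine line $L$ with $L\cap\Sigma_\infty=\x$ for some $\x\in\cS$, $\x\neq\z$. For such an $L$ the subspace $\langle\cV,L\rangle$ has dimension $3m$ and meets $\Sigma_\infty$ in $\langle\cV,\x\rangle$, so its image $\rho(L)=\langle\cV,L\rangle\cap\Gamma'$ is a $2m$-dimensional affine subspace of $\Gamma'$ whose part at infinity is $\langle\cV,\x\rangle\cap\Gamma=I(\x)$; thus $\rho(L)\in\cF(\cV)$. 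I would then read this backwards: given $F\in\cF(\cV)$ with $I(\x)\subseteq F$ and any affine point $R\in\langle\cV,F\rangle$, the join $L=\langle\x,R\rangle$ is a line of $\Pi(\cS)$ contained in $\langle\cV,F\rangle$ with $\rho(L)=F$.

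Blocking then follows quickly. The line $\ell_\infty$ meets $\cB^*$ exactly in $\z$, and every line through $\z$ contains the point $\z\in\cB^*$. For an affine line $L$ with $\x=L\cap\Sigma_\infty\neq\z$, the image $\rho(L)\in\cF(\cV)$ meets $\cB$ in some point $Q$ because $\cB$ is an $\cF(\cV)$-blocking set; lifting back, $\langle\cV,Q\rangle\cap L$ is a single point $R$, which must be affine (it cannot lie in $\Sigma_\infty$ since $\x\cap\cV=\emptyset$) and lies in $\cB^*$. Hence every line of $\Pi(\cS)$ meets $\cB^*$.

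Finally, for minimality I would exhibit a tangent line at each point of $\cB^*$. At $\z$ the line $\ell_\infty$ is tangent, since the only points of $\cB^*$ at infinity lie in $\cV\subseteq\z$. Given an affine point $R\in\cB^*$, it lies in the unique cone $\langle\cV,P\rangle$ with $P=\rho(R)\in\cB$; by minimality of $\cB$ choose $F\in\cF(\cV)$ with $F\cap\cB=\{P\}$ and $I(\x)\subseteq F$, and lift it to $L=\langle\x,R\rangle$, so that $\rho(L)=F$ and $R\in L$. Any affine point $R'\in L\cap\cB^*$ satisfies $\rho(R')\in\rho(L)\cap\cB=F\cap\cB=\{P\}$, whence $R'\in\langle\cV,P\rangle$, and $L\cap\langle\cV,P\rangle$ is the single point $R$; moreover $L$ carries no infinite point of $\cB^*$ because $L\cap\Sigma_\infty=\x\neq\z$. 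Thus $L\cap\cB^*=\{R\}$. The main obstacle I anticipate is precisely this lifting step: one must verify that the constructed $L$ is a genuine line of $\Pi(\cS)$ projecting exactly onto the chosen tangent $F$, and then use the single-valuedness of $\rho$ to transport the tangency of $F$ back to $L$. Everything else is routine dimension bookkeeping made possible by the disjointness relations among $\cV$, $\Gamma$ and the spread elements.
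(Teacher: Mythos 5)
Your proof is correct, but note what the paper actually does here: its entire proof is the one-line remark that Construction~2 of Mazzocca--Polverino \cite{mp} ``works perfectly well'' under the milder hypothesis that $\cS$ is an arbitrary $(2m-1)$-spread, with the details left to the reader. Your write-up is precisely those omitted details, organized around the projection $\rho$ from the vertex $\cV$ onto the complement $\Gamma'$, and every step checks out: $\cV\cap\Gamma'=\emptyset$ since $\cV\subseteq\Sigma_\infty$, $\cV\cap\Gamma=\emptyset$ and $\Gamma'\cap\Sigma_\infty=\Gamma$; the fibres of $\rho$ over affine points of $\Gamma'$ are the $q^m$-point affine parts of the cones, whence the disjointness of distinct cones and $|\cB^*|=q^m|\cB|+1$; for an affine line $L$ with $L\cap\Sigma_\infty=\x\neq\z$ the spread property $\x\cap\z=\emptyset$ gives $\cV\cap L=\emptyset$, so $\dim\langle\cV,L\rangle=3m$ and $\rho(L)$ is a $2m$-dimensional affine subspace of $\Gamma'$ through $I(\x)$, i.e.\ an element of $\cF(\cV)$; and your lift $L=\langle\x,R\rangle$ of a tangent $F$ does satisfy $\rho(L)=F$, because $\langle\cV,\x,R\rangle=\langle\cV,\x,P\rangle$ (both $3m$-dimensional, as $R\in\langle\cV,P\rangle$) while $F=\langle I(\x),P\rangle$ by comparing dimensions and using $P\in F\cap\cB$. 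Your argument confirms the paper's remark in the strongest sense: nothing beyond the spread property is ever used, so no Desarguesian hypothesis is needed.

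Two small points you should make explicit rather than assert. In the blocking step, the nonemptiness of $L\cap\langle\cV,Q\rangle$ needs the Grassmann count inside $\langle\cV,L\rangle$ (namely $\dim L+\dim\langle\cV,Q\rangle=2m+m=\dim\langle\cV,L\rangle$, using $Q\in\rho(L)$ so that $\langle\cV,Q\rangle\subseteq\langle\cV,L\rangle$), and the fact that this intersection is a \emph{single} point follows because a line contained in it would meet $\Sigma_\infty$ in a point of $\x\cap\cV=\emptyset$; you state the singleton and justify only affineness. Also, an element $F\in\cF(\cV)$ could a priori contain more than one member of $\cI(\cV)$; your lifting only requires choosing one $\x$ with $I(\x)\subseteq F$, so this is harmless, but it is worth a sentence. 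Neither point is a gap --- both are exactly the ``routine dimension bookkeeping'' you flag at the end.
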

\begin{proof}
Construction 2 in \cite{mp} works perfectly well under the milder hypothesis that $\cS$ is any $(2m-1)$-spread of $\Sigma_\infty$. The details are left to the reader.
\end{proof}

\comment{
We can reverse the previous result.
\begin{proposition} \label{prop_6}
Let $\cB^*$ be a set of affine $d-$dimensional subspaces in $\PG(4m,q)$ (called "generators" of $\cB^*$)
 all through the $(m-1)$-subspace $\cV$ of $\z$.  If $\cB^*$ is a minimal blocking set in $\Pi(\cS)$ then the set
\[
\cB=\{\pi\cap\Gamma': \pi \mathrm{\ is\ a\ generator\ of\ } \cB^*\}\cup\{\Theta\}.
\]
is a minimal  $\cF(\cV)-$blocking set in $\Gamma'$ of size $(|\cB^*|-(q^{m-1}+\ldots+q+1))/q^d+q^{m-1}+\ldots+q+1$.
\end{proposition}

\begin{proof}
We first note that $\pi\cap\Gamma'$ is an affine point for any generator $\pi$ of $\cB^*$.

Let $I_{2d}\in\cF(\cV)$. If $I_{2d}\subset\Gamma$ then $I_{2d}\cap\Theta$ is a point since each element of $\cI(\cV)$ is disjoint from $\Theta$. Assume $I_{2d}\not\subset\Gamma$ and set $I(\x)=I_{2d}\cap\Gamma$, for some $\x\in\cS\setminus\{\c\}$. Let $L$ be a line through $\x$ of $\Pi(\cS)$. As $\cB^*$ is a blocking set in $\Pi(\cS)$ the there is a point $P$ in $L\cap\cB^*$. We also note that $L$ intersects the generator $\<\cV,P\>$ of $\cB^*$ precisely at $P$ since $L$ is disjoint from $\cV$. The $3m$-dimesional subspace spanned by $\x$, $\cV$ and $P$ intersects $\Gamma'$ in the $2m$-dimensional subspace $I_{2d}$ giving $|I_{2d}\cap\cB|=|L\cap\cB^*|$.

It easy to see that
\[
|\cB|=\frac{|\cB^*|-(q^{m-1}+\ldots+q+1)}{q^d}+|\Theta|.
\]
\end{proof}
}

By combining the above result of Bruen and Thas with Proposition \ref{prop_1}, we get  the following theorem.

\begin{theorem}\label{th_2}
Let  $\cB$  be a minimal $\cF(\cV)$-blocking set of size $q^{2m}$.   Then, the cone $ \cB^*$ defined in Proposition $\ref{prop_1}$ is a unital in $\Pi(\cS)$.
\end{theorem}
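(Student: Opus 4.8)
The plan is to obtain the statement by feeding the hypothesis into Proposition~\ref{prop_1} and then reading off the extremal case of the Bruen--Thas bound recalled just above. First I would apply Proposition~\ref{prop_1} directly to $\cB$: by assumption $\cB$ is a minimal $\cF(\cV)$-blocking set, so the proposition immediately guarantees that $\cB^*$ is a \emph{minimal} blocking set of the plane $\Pi(\cS)$ with cardinality $|\cB^*| = q^{m}|\cB| + 1$. Substituting the hypothesis $|\cB| = q^{2m}$ gives $|\cB^*| = q^{m}\cdot q^{2m} + 1 = q^{3m}+1$. The emphasis here is on \emph{minimal}: Proposition~\ref{prop_1} yields minimality and not merely the blocking property, which is exactly what the Bruen--Thas characterization requires.

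Next I would pin down the order of the ambient plane. Because $\cS$ is a $(2m-1)$-spread of $\Sigma_\infty = \PG(4m-1,q)$, each line of the Andr\'e/Bruck--Bose plane $\Pi(\cS)$ carries $q^{2m}+1$ points, namely the $q^{2m}$ affine points of an affine $2m$-dimensional subspace together with the spread element in which that subspace meets $\Sigma_\infty$. Hence $\Pi(\cS)$ has order $q^{2m}$, and this order is the perfect square $n^{2}$ with $n = q^{m}$. This dimension count is the only structural fact the argument needs, and it follows purely from the spread parameter being $2m$.

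With the order identified as $n^{2}=q^{2m}$ and $n=q^m$, the two quantities line up exactly: $|\cB^*| = q^{3m}+1 = n^{3}+1$. I would then invoke the theorem of Bruen and Thas recalled in the text: in a projective plane of square order $n^{2}$ a minimal blocking set has size at most $n^{3}+1$, with equality precisely when the set is a unital. Since $\cB^*$ is a minimal blocking set whose size attains this bound, it is forced to be a unital of $\Pi(\cS)$, which is the assertion.

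I do not expect any genuine obstacle here once Proposition~\ref{prop_1} and the Bruen--Thas result are available; the entire content of the proof is the arithmetic check that the cone construction of \eqref{eq_1} delivers precisely the extremal cardinality $n^{3}+1$ in a plane whose order $q^{2m}$ happens to be a square. The one place warranting a sentence of care is ensuring that the object we feed to Bruen--Thas is minimal and not just blocking, but this is supplied verbatim by Proposition~\ref{prop_1}.
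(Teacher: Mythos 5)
Your proof is correct and is exactly the paper's argument: the paper derives Theorem~\ref{th_2} by the same two steps, namely applying Proposition~\ref{prop_1} to get a \emph{minimal} blocking set of size $q^{m}\cdot q^{2m}+1=q^{3m}+1$ in $\Pi(\cS)$, a plane of order $q^{2m}=(q^{m})^{2}$, and then invoking the Bruen--Thas characterization of unitals as the minimal blocking sets attaining the bound $n^{3}+1$. Your additional care about minimality (rather than mere blocking) and the order count for $\Pi(\cS)$ matches what the paper leaves implicit.
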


If $\cS$  is a Desarguesian $(2m-1)$-spread of $\Sigma_\infty$, then  there is  a unique  Desarguesian $(m-1)$-spread, say $\cT$, that fills every element of $\cS$, i.e., $\cT$ induces a $(m-1)$-spread in each spread element of $\cS$ \cite{drudge}. The following result gives a characterization of Buekenhout-Metz unitals as cones in $\PG(4m,q)$.

\begin{proposition}\cite{mp}\label{prop_3}
Let   $\cS$  be a Desarguesian $(2m-1)$-spread of $\Sigma_\infty$ and  $\cB$  a minimal $\cF(\cV)$-blocking set of size $q^{2m}$. Then, the cone  $\cB^*$ is a Buekenhout-Metz unital in  $\PG(2,q^{2m})$  if and only if $\cV$ is an element of the spread $\cT$.
\end{proposition}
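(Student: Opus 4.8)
The plan is to play the two André/Bruck-Bose representations of the Desarguesian plane $\PG(2,q^{2m})$ against each other: the representation in $\PG(4m,q)$ attached to the $(2m-1)$-spread $\cS$, in which the cone $\cB^*$ is defined, and the representation in $\PG(4,q^m)$ obtained by viewing $\PG(2,q^{2m})$ as a translation plane of dimension two over the subfield $\Fqm$. A Buekenhout-Metz unital is, by definition, an ovoidal cone in this \emph{second} representation, i.e. a cone with a single point as vertex projecting an ovoid of a solid; so the whole statement reduces to deciding exactly when $\cB^*$ is a cone of this kind.

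First I would write down the dictionary between the two representations explicitly. The refining Desarguesian $(m-1)$-spread $\cT$ is precisely the field-reduction structure for $\Fq\subset\Fqm$: passing to the quotient by $\cT$ identifies $\Sigma_\infty=\PG(4m-1,q)$ with $\PG(3,q^m)$, turns $\cS$ into a Desarguesian line-spread of $\PG(3,q^m)$, and identifies the affine part with $\AG(4,q^m)$. Under this identification an $(m-1)$-subspace of $\Sigma_\infty$ becomes a single \emph{point} of $\PG(3,q^m)$ if and only if it is $\Fqm$-rational, that is, if and only if it belongs to $\cT$; in particular the elements of $\cT$ lying inside $\z$ are exactly the $q^m+1$ points of the spread line corresponding to $\z$. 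Establishing this correspondence carefully — including the compatibility of the affine $\Fqm$-structure with $\cV$ and with $\Gamma$ — is the key geometric fact, and this is where I expect the main obstacle to lie.

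Next I would settle the ``if'' direction. Assume $\cV\in\cT$. Then $\cV$ is a point of $\PG(3,q^m)$, and for every affine point $P$ the generator $\langle\cV,P\rangle$ of $\cB^*$ is $\Fqm$-linear, hence an $\Fqm$-line through $\cV$; consequently $\cB^*$ is a genuine cone with point-vertex in the $\PG(4,q^m)$ representation. Since Theorem \ref{th_2} already guarantees that $\cB^*$ is a unital, and a cone with point-vertex is a unital if and only if its base is an ovoid (Buekenhout's characterisation, which here translates the minimal $\cF(\cV)$-blocking set condition on $\cB$ into the ovoid conditions on its image), $\cB^*$ is an ovoidal cone, i.e. a Buekenhout-Metz unital.

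Finally, for the ``only if'' direction, suppose $\cB^*$ is Buekenhout-Metz, so by definition it is an ovoidal cone in the $\PG(4,q^m)$ representation whose vertex is a well-defined point $\tau$ of $\PG(3,q^m)$, namely the intersection of all its generators. By construction (eq.~\eqref{eq_1}) every generator of $\cB^*$ contains $\cV$, so $\cV\subseteq\bigcap_{P\in\cB}\langle\cV,P\rangle=\tau$; since $\tau$, being a point of $\PG(3,q^m)$, is an $(m-1)$-subspace of $\Sigma_\infty$ of the same dimension as $\cV$, equality of dimensions forces $\cV=\tau$. As $\tau$ is $\Fqm$-rational, the dictionary of the second paragraph gives $\cV\in\cT$, completing the equivalence.
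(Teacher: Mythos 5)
You should first note that the paper contains no proof of this proposition to compare against: it is quoted verbatim from \cite{mp}, so your attempt can only be judged on its own merits. Your overall strategy --- playing the $\PG(4m,q)$ representation attached to $\cS$ against the $\PG(4,q^m)$ representation obtained through the refining spread $\cT$, with $\cT$-elements as the $\Fqm$-rational points --- is the natural one, and your dictionary in the second paragraph is correct: for $\cV\in\cT$ the closure of an affine $\Fqm$-line with direction $\cV$ through $P$ is exactly $\langle\cV,P\rangle$, so the ``if'' direction is sound \emph{modulo} the converse-Buekenhout step you invoke (``a point-vertex cone is a unital iff its base is an ovoid''). That converse is true but is not merely a citation of Buekenhout's theorem, which goes the other way; it needs the standard argument that a set of $q^{2m}+1$ points of $\PG(3,q^m)$ met by every plane in $1$ or $q^m+1$ points is an ovoid, applied to the projected base. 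Flag it as a lemma rather than folding it into one clause.

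The genuine gap is in your ``only if'' direction. You write $\cV\subseteq\bigcap_{P\in\cB}\langle\cV,P\rangle=\tau$, but the equality is unjustified and in fact circular: the vertex $\tau$ is the intersection of the \emph{$\Fqm$-line generators} of the ovoidal cone, whereas $\bigcap_{P\in\cB}\langle\cV,P\rangle$ is the intersection of the \emph{$\Fq$-generators} of your construction, and any two distinct subspaces $\langle\cV,P\rangle$, $\langle\cV,P'\rangle$ intersect exactly in $\cV$, so that intersection equals $\cV$ unconditionally --- no hypothesis on $\cB^*$ enters. Equating it with $\tau$ silently assumes that the two rulings of the cone coincide, which is precisely what has to be proved. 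The gap is fixable by one more use of the unital property. Since $\cB^*$ has $\z$ as its unique point at infinity, the spread line of the ovoidal cone is the one corresponding to $\z$ and its vertex $\tau$ (a $\cT$-element) satisfies $\tau\subset\z$; likewise $\cV\subset\z$. Fix an affine point $R$ of the cone and consider the line $\ell$ of $\Pi(\cS)$ through $R$ and $\z$, represented by the affine $2m$-space with direction $\z$ through $R$. Because both $\cV$ and $\tau$ lie in $\z$, the $q^m$ affine points of $\langle\cV,R\rangle$ \emph{and} the $q^m$ affine points of the $\Fqm$-generator through $R$ all lie on $\ell$; but $\ell$ meets the unital $\cB^*$ in exactly $q^m+1$ points, one of which is $\z$, so it carries only $q^m$ affine unital points. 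Hence the two generator point sets through $R$ coincide, their projective closures agree, and intersecting with $\Sigma_\infty$ gives $\cV=\tau\in\cT$. With that substitution (and the converse-Buekenhout lemma made explicit) your proof goes through.
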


\comment{
It turns out that a minimal $\cF(\cV)-$blocking set $\cB$ of size $q^{2m}$ with $\cV$  not in $\cT$ provides a  non Buekenhout-Metz unital in $\PG(2,q^{2m})$.
}

\section{Unitals from eggs}\label{sec_3}

 An {\em egg}  in $\PG(4m-1,q)$ is a set $\cE$ of $q^{2m}+1$ pairwise disjoint  $(m-1)-$dimensional subspaces such that any three egg elements span a $(3m-1)-$dimensional subspace. When $m=1$, this definition  recovers indeed the notion of  ovoid in $\PG(3,q)$.  Therefore, since the notion of an egg, introduced by J.A. Thas in \cite{thas}, generalizes that of an ovoid, it make sense to investigate whether it is possible to mimic Buekenhout's construction to get unitals in translation planes with dimension over their kernel greater than two, by using eggs.  Apart from the so-called {\em elementary eggs}, which are obtained by applying the field reduction to an ovoid in $\PG(3,q^m)$,  there are few other known examples of eggs, namely, the Kantor-Knuth eggs, the Cohen-Ganley eggs and the (sporadic) Penttila-Williams egg; see \cite{lp} for an explicit description of these objects. 
 

Let $\cE$ be an egg in $\PG(4m-1, q)$. For every egg element $E$ there exists a unique $(3m - 1)$-dimensional subspace, denoted by $E^*$, containing $E$ and disjoint from any other	egg element; it is called the {\em tangent space of} $\cE$ at $E$.
Therefore, the egg $\cE$ defines an egg in the dual space of $\PG(4m-1,q)$, called the {\em dual egg} of $\cE$ and denoted by $\cE^D$. 

The following result is a corollary of Theorem \ref{th_2}.
\begin{theorem}\label{th_3}
Let $\cE$ be an egg in $\PG(4m-1,q)$,  and $E_\infty$ a fixed egg element.   Let $\Gamma'$ be a  $3m$-dimensional subsubspace of $\PG(4m-1,q)$ containing the tangent space $E_\infty^*$ at $E_\infty$.  In $\Gamma'$ we consider the sets:  
 \[
 \cB_{\cE}=\{ E\cap \Gamma': E\in \cE,  E\neq  E_\infty\}
 \]
and 
\[
\cI_{\cE}=\{E^* \cap E_\infty^*: E\in \cE,  E\neq  E_\infty\}.
\]
Let $\cF_{\cE}$ be the family of all affine $2m$-dimensional subspaces of $\Gamma'$ containing an element of $\cI_{\cE}$, and assume that  $\cB_\cE$ is a minimal $\cF_{\cE}$-blocking set.  

Embed $\Gamma'$ in $\PG(4m,q)$ in such a way that $E_\infty^*$ is a subspace of the hyperplane at infinity  $\Sigma_\infty$ of $\PG(4m,q)$, and $\Gamma'$ is an affine subspace. 

If there exist  a $(2m-1)$-spread $\cS$ of $\Sigma_\infty$ and  a $(m-1)$-dimensional subspace $\cV$ disjoint from $E_\infty^*$ and contained in a spread element $\z$ such that $\cI_{\cE}=\cI(\cV)$, then the cone 
\[
\cB^*=\bigcup_{ P\in\cB_{\cE}}{\langle P,\cV\rangle}\cup\{\z\}
\]
is a unital in  $\Pi(\cS)$.  
\end{theorem}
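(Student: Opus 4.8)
The plan is to recognize the data $(\cE, E_\infty, \Gamma')$ as an instance of the geometric configuration introduced just before Proposition \ref{prop_1}, and then to invoke Theorem \ref{th_2}. Concretely, after the embedding the subspace $E_\infty^*$ takes over the role of the subspace $\Gamma$ there: it is $(3m-1)$-dimensional, it lies in $\Sigma_\infty$, and by hypothesis it is disjoint from $\cV$, while $\cV$ sits inside the spread element $\z$; thus all the standing requirements on $\cS,\z,\cV,\Gamma,\Gamma'$ are met, and the part at infinity of $\Gamma'$ is exactly $E_\infty^*=\Gamma'\cap\Sigma_\infty$. It then suffices to verify the two hypotheses of Theorem \ref{th_2} for $\cB=\cB_\cE$, namely that $\cB_\cE$ is a minimal $\cF(\cV)$-blocking set and that $|\cB_\cE|=q^{2m}$.

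First I would establish that $\cB_\cE$ is a set of exactly $q^{2m}$ distinct affine points. Because $E_\infty^*$ is the tangent space of $\cE$ at $E_\infty$, it is disjoint from every other egg element $E$; consequently $E\cap\Gamma'$ avoids $E_\infty^*=\Gamma'\cap\Sigma_\infty$ and so is an affine point. To see that each $E\cap\Gamma'$ is a single point, I would pass to the quotient $\PG(4m-1,q)/E_\infty^*$, a projective space of dimension $m-1$: in it the $(m-1)$-dimensional $E$ (disjoint from $E_\infty^*$) maps onto the whole quotient, whereas the $3m$-dimensional $\Gamma'\supseteq E_\infty^*$ collapses to a single point. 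Since $E\cap\Gamma'$ is disjoint from $E_\infty^*$ it injects into the quotient, where its image lies in this one point, forcing $|E\cap\Gamma'|=1$ (nonemptiness being guaranteed by the dimension bound $(m-1)+3m-(4m-1)=0$). Distinctness across the $q^{2m}$ elements $E\neq E_\infty$ is then immediate from the pairwise disjointness of egg elements, since a coincidence $E\cap\Gamma'=E'\cap\Gamma'$ would produce a point of $E\cap E'=\emptyset$. Hence $|\cB_\cE|=q^{2m}$.

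It remains to translate the blocking condition. By definition $\cF_\cE$ consists of the affine $2m$-dimensional subspaces of $\Gamma'$ through an element of $\cI_\cE$, while $\cF(\cV)$ is defined identically with $\cI(\cV)$ in place of $\cI_\cE$; therefore the hypothesis $\cI_\cE=\cI(\cV)$ gives $\cF_\cE=\cF(\cV)$ verbatim, and the assumed minimal $\cF_\cE$-blocking set $\cB_\cE$ is a minimal $\cF(\cV)$-blocking set. Applying Theorem \ref{th_2} to $\cB=\cB_\cE$ now yields that the cone $\cB^*=\bigcup_{P\in\cB_\cE}\langle P,\cV\rangle\cup\{\z\}$ is a unital in $\Pi(\cS)$, as required. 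I expect the only substantive step to be the cardinality count of the second part; its whole force comes from using both egg axioms --- pairwise disjointness of the egg elements and disjointness of each tangent space from the remaining elements --- whereas the matching of the two families $\cF_\cE$ and $\cF(\cV)$ and the final appeal to Theorem \ref{th_2} are purely formal.
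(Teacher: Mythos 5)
Your proposal is correct and follows essentially the same route as the paper: identify $E_\infty^*$ with the subspace $\Gamma$ of the standing configuration, observe that $\cI_\cE=\cI(\cV)$ forces $\cF_\cE=\cF(\cV)$, and apply Theorem \ref{th_2}. The only difference is that you spell out the count $|\cB_\cE|=q^{2m}$ (via the quotient by $E_\infty^*$ and disjointness of egg elements), which the paper simply asserts; that verification is sound and consistent with the paper's intent.
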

\begin{proof} 
Here, $\cB_{\cE}$ is a set of  $q^{2m}$ points of $\Gamma' \setminus E^*_{\infty}$, and hence it consists of affine points of $\PG(4m,q)$. Furthermore, every element in $\cI_{\cE}$ is a $(2m-1)$-dimensional subspace of $E^*_{\infty}$. By Theorem \ref{th_2}, if $\cF_{\cE}$ coincides with the family $\cF(\cV)$ previously defined, then $\cB^*$ is a unital in the semifield plane $\Pi(\cS)$. Since $\cF_{\cE}$ consists of all affine $2m$-dimensional subspaces of $\Gamma'$ through an element of $\cI_{\cE}$, we get that   $\cF_{\cE}=\cF(\cV)$ if and only if $\cI_{\cE}=\cI(\cV)$. 
\end{proof}

An egg  is said to be {\em good at an element } $E$ if every $(3m-1)$-dimensional subspace containing $E$ and at least two other egg elements, contains exactly $q^m+1$ egg elements \cite{thas2}.

Let $\cK$ be the quadratic cone in $\PG(3,q^m)$	with equation $X_0X_1=X_2^2$. A {\em flock} of $\cK$ is a set of $q^m$ planes partitioning the cone minus  its vertex $V=\<(0,0,0,1)\>$ into disjoint conics. 
In accordance with this choice of coordinates, the planes of a flock of $\cK$ can be written as $tX_0+f(t)X_1+g(t)X_2+X_3=0$, for all $t\in\Fqm$, for some $f,g:\Fqm\rightarrow\Fqm$. We denote this flock by $\cF(f,g)$. If $f$ and $g$ are linear over a subfield of $\Fqm$, then the flock is called a {\em semifield} flock. The maximal subfield with this property is called the {\em kernel} of the flock. 

From now on, we assume that the kernel of a semifield flock $\cF(f,g)$ is $\Fq$. This implies that the $f$ and $g$ are $\Fq$-linearized polynomials, i.e.
\[
f(t)=\sum_{i=0}^{m-1}{c_it^{q^i}},\,\,\,\,\, g(t)=\sum_{i=0}^{m-1}{b_it^{q^i}},
\]
for some $b_i,c_i\in\Fqm$, $i=0,\ldots,m-1$.

If a basis of $\Fqm$ over $\Fq$ is fixed, then every $r$-ple $(x_1,\ldots,x_r)\in \Fqm^r$ can be viewed as a $rm$-ple over $\Fq$, which will be denoted  by $(x_1,\ldots,x_r)_q$.  In the paper \cite{lp} it was shown that for every semifield flock $\cF(f,g)$ there corresponds  an egg in  $\PG(4m-1,q)$ whose dual, say $\cE$, is good at an element, which can be assumed to be $E_{\infty}$.  Then, the elements and the tangent spaces of $\cE$  have the following form, respectively:
\begin{equation}\label{eq_4}
\begin{array}{ll}
 E(a,b)=\{( t,-g_{(a,b)}(t), -at,-bt)_q:t\in\Fqm\}, \mathrm{ \ for \ all\ }a,b\in\Fqm,\\[.1in]
E_\infty=\{(0,t,0,0)_q:t\in\Fqm\},\\[.1in]
E^*(a,b)=\{(t,h_{(a,b)}(r,s)+g_{(a,b)}(t),r,s)_q:t,r,s\in\Fqm\}, \mathrm{\ for \ all\ }a,b\in\Fqm,\\[.1in]
E^*_\infty=\{(0,t,r,s)_q:t,r,s\in\Fqm\},
\end{array}
\end{equation}
with  
\[
g_{(a,b)}(t)= a^2t+\sum_{i=0}^{m-1}{(b_{i} ab+c_{i} b^2)^{1/{q^i}}t^{1/{q^i}}},\]
and 
\[h_{(a,b)}(r,s)=2ar+\sum_{i=0}^{m-1}{(b_{i} (as+br)+2c_{i} bs)^{1/{q^i}}}.
\]
Because of the expression of the polynomials $g(a,b)$ and $h(a,b)$, such an egg will be denoted by $\cE(\b,\c)$.

\comment{An egg $\cE$ in $\PG(4m-1,q)$ together with the set $T_\cE$ of the tangent spaces  is said to be of {\em additive type}  if the elements of $\cE$ have the form 
\[
\begin{array}{ll}
E(a,b)=\{\<(t,-g_{(a,b)}(t),-(a,b)^{\delta_t})\>_q:t\in\Fqm\}, \mathrm{\ for \ all\ }a,b\in\Fqm\\[.1in]
E_\infty=\{\<(0,t,0,0)\>_q:t\in\Fqm\}
\end{array}
\]
and the elements of $T_\cE$ have the form
\[
\begin{array}{ll}
E^*(a,b)=\{\<(t,ar+bs+g_{(a,b)}(t),r,s)\>_q:r,s,t\in\Fqm\}, \mathrm{\ for \ all\ }a,b\in\Fqm\\[.1in]
E_\infty^*=\{\<(0,t,r,s)\>_q:r,s,t\in\Fqm\},
\end{array}
\]
where  
\[
\begin{array}{ll}
g_{(a,b)}(t)=\sum_{i=0}^{m-1}{(a_ia^2+b_iab+c_ib^2)t^{q^i}}\\[.1in]
(a,b)^{\delta_t}=(\sum_{i=0}^{m-1}{(2a_ia+b_ib)t^{q^i}},\sum_{i=0}^{m-1}{(b_ia+2c_ib)t^{q^i}}),
\end{array}
\]
for some $a_i\,b_i,c_i\in\Fqm$.
If an egg can be written in this form then the egg will  be denoted by $\cE(\a,\b,\c)$, where $\a=(a_0,\ldots, a_{m-1})$, $\b=(b_0,\ldots, b_{m-1})$ and $\c=(c_0,\ldots, c_{m-1})$. 

The following result characterizes good eggs in $\PG(4m-1,q)$ when $q$ is odd.

\begin{theorem}\cite[Theorem 3.4]{lp}\label{th_10}
An egg $\cE$ in $\PG(4m-1,q)$, $q$ odd, is good at an element if and only if $\cE$ is the dual of an additive type egg.
\end{theorem}
}
 
\begin{theorem}\label{th_11}
 Let $\cE=\cE(\b,\c)$ be a good egg of $\PG(4m-1,q)$, which is good at $E_\infty$. Then, the set $\cB_{\cE}$  is a minimal $\cF_{\cE}$-blocking set in $\Gamma'=\PG(3m,q)$  if and  only if $ X^2+\sum_{i=0}^{m-1}{(b_{i} XY+c_{i} Y^2)^{1/{q^i}}}+c=0$ has a solution for all $c\in\Fqm$.
 \end{theorem}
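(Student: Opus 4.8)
The plan is to make the blocking condition completely explicit in coordinates and to observe that everything is governed by the single function $Q(a,b):=g_{(a,b)}(1)=a^2+\sum_{i=0}^{m-1}(b_iab+c_ib^2)^{1/q^i}$, which is exactly the left-hand side of the displayed equation evaluated at $(X,Y)=(a,b)$. First I would fix a basis of $\Fqm$ over $\Fq$ and choose $\Gamma'$ so that $E_\infty^*=\{(0,x_1,x_2,x_3)_q\}$ is its hyperplane at infinity and the first $\Fqm$-coordinate of an affine point is normalised to $1$. Evaluating $E(a,b)\cap\Gamma'$ forces $t\in\Fq$ (hence $t=1$) and identifies $\cB_{\cE}$ with the graph $(x_1,x_2,x_3)=(-Q(a,b),-a,-b)$, $(a,b)\in\Fqm^2$; computing $E^*(a,b)\cap E_\infty^*$ identifies $\cI_{\cE}$ with the subspaces $I(a_0,b_0)=\{(0,h_{(a_0,b_0)}(r,s),r,s)_q\}$. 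Since every element of $\cF_{\cE}$ is an affine $2m$-space meeting $E_\infty^*$ in some $I(a_0,b_0)$ and $h_{(a_0,b_0)}$ is $\Fq$-linear, it is the graph $F_{(a_0,b_0),c_0}\colon\ x_1=c_0+h_{(a_0,b_0)}(x_2,x_3)$ for a unique $c_0\in\Fqm$, so $\cF_{\cE}=\{F_{(a_0,b_0),c_0}:(a_0,b_0,c_0)\in\Fqm^3\}$.

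The crux is an algebraic identity: $h_{(a_0,b_0)}$ is exactly the polarisation $B$ of $Q$. A direct expansion gives $B((a,b),(a',b'))=2aa'+\sum_i(b_i(ab'+a'b)+2c_ibb')^{1/q^i}$, and substituting $(a',b')=(a_0,b_0)$ recovers $h_{(a_0,b_0)}(a,b)$. Hence a point $(a,b)\in\cB_{\cE}$ lies on $F_{(a_0,b_0),c_0}$ iff $-Q(a,b)+B((a,b),(a_0,b_0))=c_0$, and completing the square (legitimate since $q$ is odd) rewrites this as
\[
Q\big((a,b)-(a_0,b_0)\big)=Q(a_0,b_0)-c_0 .
\]
Thus $F_{(a_0,b_0),c_0}$ meets $\cB_{\cE}$ in as many points as $Q$ has preimages of the value $Q(a_0,b_0)-c_0$.

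From here the blocking condition falls out: as $(a_0,b_0,c_0)$ varies, $Q(a_0,b_0)-c_0$ runs over all of $\Fqm$, so every element of $\cF_{\cE}$ is blocked if and only if $Q$ is surjective, i.e.\ iff $X^2+\sum_i(b_iXY+c_iY^2)^{1/q^i}+c=0$ is solvable for every $c$. This yields the ``only if'' direction and the blocking half of the ``if'' direction. For minimality I would note that a tangent element through the point indexed by $(a^*,b^*)$ exists iff some value of $Q$ has a unique preimage; taking $(a_0,b_0)=(a^*,b^*)$ and $c_0=Q(a^*,b^*)$, the displayed incidence becomes $Q((a,b)-(a^*,b^*))=0$, so $F_{(a^*,b^*),\,Q(a^*,b^*)}$ (the element cut out by the tangent space $E^*(a^*,b^*)$) is tangent at $(a^*,b^*)$ precisely when $Q$ is anisotropic.

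The step I expect to be the main obstacle is justifying that $Q$ is anisotropic, i.e.\ $Q(a,b)=0\Rightarrow(a,b)=(0,0)$. This does not follow from surjectivity, but it is forced by the hypothesis that $\cE$ is a genuine egg: anisotropy of $Q$ is equivalent to the semifield-flock condition (that $g(u)^2-4uf(u)$ is a non-square for all $u\neq0$) underlying the construction of $\cE(\b,\c)$ in \cite{lp}. Once this is in hand, $w=(0,0)$ always has a unique preimage, the explicit tangents above show $\cB_{\cE}$ is minimal, and combining with the blocking equivalence gives the stated characterisation. I would either verify this anisotropy/flock equivalence directly (it is immediate for $m=1$, where $Q$ is a binary form of discriminant $b_0^2-4c_0$) or cite it from the flock–egg correspondence.
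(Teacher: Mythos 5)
Your proposal is correct, and its skeleton---the coordinatization of $\Gamma'$, the graph description of $\cB_{\cE}$ and of the elements of $\cF_{\cE}$, and the incidence equation between $P(a,b)$ and $F(a_0,b_0,c_0)$---coincides with the paper's proof. Where you genuinely diverge is in the reduction to the case $(a_0,b_0)=(0,0)$: the paper invokes the linear collineation $\psi_{a,b}$ of \cite[Lemma 1.4]{ml}, which fixes $E_\infty$ pointwise, stabilizes $\Gamma'$, $\cB_{\cE}$, $\cI_{\cE}$ and $\cF_{\cE}$, and translates all the indices, whereas you observe that $h_{(a_0,b_0)}$ is the polarization of the $\Fq$-quadratic form $Q(a,b)=g_{(a,b)}(1)$ and complete the square (legitimately, as $q$ is odd), obtaining $Q(v-w)=Q(w)-c_0$. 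These are two faces of the same translation argument; your algebraic version is self-contained, needs no citation, and makes the equivalence ``blocking $\Leftrightarrow Q$ surjective'' completely transparent, while the paper's collineation has the advantage of acting on the whole configuration at once (egg elements, tangent spaces, $\cI_{\cE}$, $\cF_{\cE}$), which the paper reuses elsewhere.

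One remark on what you flag as the main obstacle: anisotropy of $Q$ need not be imported from the flock condition that $g(u)^2-4uf(u)$ is a nonsquare. You already identified $F_{(a^*,b^*),\,Q(a^*,b^*)}$ as the trace $E^*(a^*,b^*)\cap\Gamma'$ of the tangent space; since a tangent space of an egg meets no egg element other than its own, this element of $\cF_{\cE}$ meets $\cB_{\cE}$ only at $P(a^*,b^*)$. This is exactly how the paper obtains minimality---unconditionally, with no solvability or anisotropy hypothesis, so that the equivalence in the theorem is purely about the blocking property. Equivalently, in your language: if $Q(x,y)=0$ with $(x,y)\neq(0,0)$, then $P(x,y)=\<(1,0,-x,-y)_q\>$ would be a common point of $E(x,y)$ and $E^*(0,0)=\{(t,0,r,s)_q : t,r,s\in\Fqm\}$, contradicting the tangent-space property; so $Q$ is automatically anisotropic and your explicit tangents exist at every point. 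With this observation your minimality step closes without the flock detour, and the proof matches the statement.
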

\begin{proof}
Let $\Gamma'=\{(u,t,r,s)_q:u\in\Fq \hbox{ and } r,s,t\in \Fqm\}$. It is evident that $\Gamma'$ is a projective space of dimension $3m$ over $\Fq$ and it contains $E^*_\infty$. By  taking into account the general form of the elements of $\cE=\cE(\b,\c)$, we get 
\[
\cB_{\cE} =  \{\<(1,-g_{(a,b)}(1),-a,-b)_q\>:a,b\in \Fqm\}
 \]
and
\[
\cI_{\cE} =   \{I(a,b):a,b\in \Fqm\},
\]
where 
\begin{equation}\label{eq_27}
 I(a,b)= E^*(a,b)\cap E^*_{\infty}=\{(0,h_{(a,b)}(r,s),r,s)_q:r,s\in\Fqm\}.
\end{equation} 

All the affine $2m$-dimensional subspaces of $\Gamma'$ through an $I(a,b)$ are determined by joining it with an affine point of the affine $m$-dimensional subspace spanned by $E_{\infty}$ and $O=\<(1, 0, 0, 0, 0)\>$.
Therefore, the elements of  $\cF_{\cE}$ have the form  
\[
 F(a,b,c)=\{(u,uc+h_{(a,b)}(r,s),r,s)_q:u\in\Fq \hbox{ and } r,s\in\Fqm\},
\]
for all $a,b,c\in\Fqm$.

A point $P(x,y)=\<(1,-g_{(x,y)}(1),-x,-y)_q\>\in \cB_{\cE}$ lies in $F(a,b,c)$ if and only if 
\[
-g_{(x,y)}(1)=-h_{(a,b)}(x,y)+c
\]
or, equivalently, if and only if $(x,y)$ is a solution of 
\begin{equation}\label{eq_8}
X^2+\sum_{i=0}^{m-1}{(b_{i} XY+c_{i} Y^2)^{1/{q^i}}}-2aX-\sum_{i=0}^{m-1}{(b_{i}(aY+bX)+2c_{i}bY)^{1/{q^i}}}+c=0.
\end{equation}

We refer to the polynomial on the left-hand side of the equation as $H_{(a,b,c)}(x,y)$.
Since $\cE$ is an egg, for any given  $a,b\in\Fqm$, the intersection of the tangent space $E^*(a,b)$ with $\Gamma'$ is the $2m$-dimensional subspace  $F(a,b, c')\in\cF_{\cE}$, with $ c'=g_{(a,b)}(1)$. Therefore,  through the point $P(a,b)\in\cB_{\cE}$ there is the element $F(a,b, c')\in\cF_{\cE}$ intersecting $\cB_{\cE}$ precisely  at $P(a,b)$. 
 This implies that $\cB_{\cE}$ is a minimal $\cF_{\cE}$-blocking set  if and only  if  Eq. (\ref{eq_8}) has a solution $(x,y)\in\Fqm\times\Fqm$ for any given  elements  $a,b,c \in \Fqm$. 

From \cite[Lemma 1.4]{ml}, for any $a,b \in \Fqm$, the linear collineation
\[
\begin{array}{cccc}
\psi_{a,b}: & \PG(4m-1,q) & \longrightarrow & \PG(4m-1,q) \\
            &\<(u,t,r,s)_q\> & \mapsto & \<(u,t+h_{(a,b)}(r,s)-g_{(a,b)}(u),r-ua,s-ub)_q\>
\end{array}
\]
fixes $E_\infty$ pointwise and maps $E(a',b')$ to $E(a'+a,b'+b)$. In addition, $\psi_{a,b}$ fixes $\Gamma'$, and hence $\cB_{\cE}$.  A straightforward, though tedious, calculation shows that $\psi_{a,b}$ acts also on the set of tangent spaces by fixing $E^*_\infty$ setwise and mapping  $ E^*(a',b')$ to $ E^*(a+a',b+b')$.  This implies that $\psi_{a,b}$ fixes both $\cI_{\cE}$ and $\cF_{\cE}$ setwise; in particular, $ F(a',b',c)$ is mapped to $ F(a+a',b+b',c')$, with $c'=c-g_{a,b}(1)+ h_{(a'+a,b'+b)}(a,b)$. This means that, because of the linearity of the second sum in Eq. (\ref{eq_8}),  $H_{(a'+a,b'+b,c)}(x,y)=0$ has a solution for all $c\in\Fqm$ if and only if $H_{(a,b,c)}(x,y)=0$ has a solution for all $c\in\Fqm$. Therefore, $\cB_{\cE}$ is a minimal $\cF_{\cE}$-blocking set  if and only if, for a fixed pair  $(a,b)\in\Fqm\times \Fqm$,  Eq. (\ref{eq_8}) has at least one solution $(x,y)\in\Fqm\times \Fqm$, for all $c\in\Fqm$.  In particular, we can chose $(a,b)=(0,0)$ so that Eq. (\ref{eq_8}) reduces to
\begin{equation}\label{eq_8_1}
X^2+\sum_{i=0}^{m-1}{(b_{i} XY+c_{i} Y^2)^{1/{q^i}}}+c=0.
\end{equation}
\end{proof}


\section{A new unital in a Dickson commutative semifield plane}

In \cite{pw}, Penttila and  Williams constructed an ovoid of the  parabolic quadric $Q(4,3^5)$ in $\PG(4,3^5)$, i.e., a set $\cO$ of $3^{10}+1$ points having exactly one point on each generator of the quadric. Moreover, $\cO$ is a translation ovoid, meaning that the points of $\cO$ can be coordinatized by using functions that are additive over $\mathbb{F}_3$. According to a construction given in \cite{lun2}, such a translation ovoid  corresponds to a semifield flock of the quadratic cone in $\PG(3,3^5)$, which, in turn,  corresponds to a generalized quadrangle with parameters $(3^{10},3^5)$, whose point-line dual is a translation generalized quadrangle. By a result of Payne and Thas \cite[8.7.1]{pt}, the latter generalized quadrangle is isomorphic to $T(\cE)$ for some egg $\cE$ in $\PG(19,3)$. By Theorem 3.4 in \cite{lp}, the dual egg of $\cE$ forms a good egg $\cE^D$ in $\PG(19,3)$.
Whence, via the above correspondences, the Penttila-Williams ovoid of $Q(4,3^5)$ gives rise to a good (dual) egg in $\PG(19,3)$. 
In order to simplify the notation, we will refer to it as  $\cE=\cE(\b,\c)$ with $\b=(0,1,0,0,0)$, $\c=(0,0,0,-1,0)$; see \cite{lp}. 

According to the expressions of the polynomials $g_{(a,b)}(t)$ and $h_{(a,b)}(r,s)$ in this case,  the egg elements  of $\cE$ are defined by the polynomials 
\begin{equation}\label{gpw}
g_{(a,b)}(t)=a^2t-(b^2)^{3^2}t^{3^2}+(ab)^{3^4}t^{3^4}
\end{equation}
and
\begin{equation}\label{hpw}
h_{(a,b)}(r,s)=-ar+b^{3^2}s^{3^2}+(br+as)^{3^4},
\end{equation}
for all $a,b\in\Ftf$.

Let $p$ be an odd prime and $\xi$  a non-square in $\mathbb{F}_{p^m}$. By \cite[p.241]{dem}, the multiplication defined by 
\[
(x,y)*(a,b)=(ax+\xi b^{\alpha}y^{\alpha},bx+a y) 
\]
with $\alpha\in\Aut(\mathbb{F}_{p^m})$  not  the identity, turns $\mathbb{F}_{p^m}^2$ into a Dickson commutative semifield of order $p^{2m}$ which we denote by $\bD=\bD(p^m,\xi,\alpha)$. In particular, its middle nucleus is $\cN_m=\{(a,0): a\in\mathbb{F}_{p^m}\}$, and its left nucleus is $\cN_l=\{(a,0): a\in \mathrm{Fix}(\alpha)\}$, coinciding with its center $\cK$.
 
Now, let $p=3$ and $m=5$. 
For any pair $(a,b)\in\Ftf^2$, we consider the following map
\[
\begin{array}{cccc}
\tau_{(a,b)}: & (x,y) & \mapsto & (bx+ay,-ax+b^{3^2}y^{3^2}),
\end{array}
\]
which defines  the subspaces
$S(a,b)=\{((x,y)^{\tau_{(a,b)}},x,y)_3:x,y\in \Ftf\}$
of $\PG(19,3)$. Set  $\cS=\{S(a,b):a,b \in\Ftf\}\cup \{S_\infty\}$, where $S_\infty=\{(x,y,0,0)_3: x,y\in\Ftf\}$.

 Let $\varphi$ be the linear map $\varphi:(x,y)\mapsto(-y,x)$.  Then, the set  $\{\varphi\tau_{(a,b)}:a,b\in\Ftf\}$ is precisely the spread set of $\mathbb{F}_3^{10}$ associated with the Dickson commutative  semifield $\bD=\bD(3^5,-1,3^2)$.

It turns out that $\cS$ is a 9-spread of $\Sigma_\infty=\PG(19,3)$ and, by \cite{alb}, the translation plane $\Pi(\cS)$ is isomorphic to the Dickson commutative semifield plane $\pi(\bD)$.

Let  $\cV=\{(t,-t^{3^4},0,0)_3:t\in\Ftf\}$. Then, $\cV$ is contained in the spread element $\z=S_\infty$ and it intersects trivially the subspace $\Gamma=E^*_\infty=\{(0,t,r,s)_3:t,r,s\in\Ftf\}$. We also have
\[
\<S(a,b),\cV\>=\{(br+as+t,-ar+b^{3^2}s^{3^2}-t^{3^4},r,s)_3:t,r,s\in\Ftf\},
\]
giving
\[
\<S(a,b),\cV\>\cap\Gamma= \{(0,-ar+b^{3^2}s^{3^2}+(br+as)^{3^4},r,s)_3:r,s\in\Ftf\}
\]
which is precisely the subspace $I(a,b)$ defined by expression (\ref{eq_27}), with $h_{(a,b)}(r,s)$ as in (\ref{hpw}).
\begin{proposition}
The set $\cB_{\cE}$ defined by the Penttila-Williams egg $\cE=\cE(\b,\c)$ is a minimal $\cF(\cE)$-blocking set. 
 \end{proposition}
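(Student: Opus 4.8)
The plan is to obtain this Proposition as a direct corollary of Theorem \ref{th_11}. The egg $\cE=\cE(\b,\c)$ coming from the Penttila--Williams ovoid is good, and good at $E_\infty$, and here $\Gamma'=\PG(15,3)$, so the hypotheses of Theorem \ref{th_11} are met. That theorem then reduces the entire assertion to a single solvability statement: $\cB_{\cE}$ is a minimal $\cF_{\cE}$-blocking set if and only if Eq.~(\ref{eq_8_1}), namely $X^2+\sum_{i=0}^{4}(b_iXY+c_iY^2)^{1/3^i}+c=0$, has a solution $(x,y)\in\Ftf\times\Ftf$ for every $c\in\Ftf$. Thus the whole task becomes the analysis of one explicit equation over $\Ftf$.

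First I would substitute the Penttila--Williams data $\b=(0,1,0,0,0)$ and $\c=(0,0,0,-1,0)$. Only the indices $i=1$ (with $b_1=1$) and $i=3$ (with $c_3=-1$) survive, so Eq.~(\ref{eq_8_1}) collapses, using $(-1)^{1/27}=-1$, to
\[
X^2+(XY)^{1/3}-(Y^2)^{1/27}+c=0 .
\]
It then remains to show that this has a solution for every $c\in\Ftf$.

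The crux --- and what I expect to be the main obstacle, at least psychologically --- is to recognize that one need not solve the genuine two-variable equation in ``general position''; the two degenerate slices $X=0$ and $Y=0$ already suffice, once one exploits that $-1$ is a nonsquare in $\Ftf$ (indeed $3^5\equiv 3\pmod 4$). Setting $Y=0$ reduces the equation to $X^2=-c$, solvable exactly when $-c$ is a square, i.e. when $c$ is a nonsquare or $0$. Setting $X=0$ reduces it to $(Y^2)^{1/27}=c$; since $t\mapsto t^{1/27}$ is a field automorphism it carries squares onto squares, so $(Y^2)^{1/27}$ runs precisely over the squares together with $0$ as $Y$ varies, and this slice is solvable exactly when $c$ is a square or $0$. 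Because every element of $\Ftf$ is a square, a nonsquare, or $0$, the two slices jointly cover all values of $c$, which finishes the argument. The only arithmetic input is the complementarity of squares and nonsquares under multiplication by the nonsquare $-1$; in particular no estimate on the number of points of the associated curve, and no universality argument for the full nonlinear form $X^2+(XY)^{1/3}-(Y^2)^{1/27}$, is required.
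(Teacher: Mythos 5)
Your proof is correct and follows essentially the same route as the paper: both invoke Theorem \ref{th_11} to reduce the claim to the solvability of $X^2+(XY)^{1/3}-(Y^2)^{1/27}+c=0$ for all $c\in\Ftf$, and both then settle it via the two slices $Y=0$ and $X=0$, using that $-1$ is a nonsquare in $\Ftf$ and that the Frobenius preserves squares. The paper merely phrases the case split as ``$-c$ square or not'' and writes the inverse Frobenius powers as $3^4$ and $3^2$, exhibiting the explicit solutions $(\pm\sqrt{-c},0)$ and $(0,\pm\sqrt{c^{3^3}})$.
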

 \begin{proof}
 By taking into account Theorem \ref{th_11}, $\cB_{\cE}$  is a minimal $\cF_{\cE}$-blocking set  if and  only if 
 \begin{equation}\label{eq_5}
  X^2+ (XY)^{3^4}-(Y^2)^{3^2}=-c
  \end{equation} 
has a solution for all $c\in\Fqm$. 

We distinguish two cases:  $-c$ is a square in $\Fqm$ or not. If $-c$ is a square, then $(\pm \sqrt{-c},0)$ are solutions of Eq. (\ref{eq_5}); if $-c$ is not a square, then $(0,\pm \sqrt{c^{3^3}})$ are solutions of Eq. (\ref{eq_5}).
\end{proof} 
By Theorem \ref{th_3}, the cone 
\[
\cB^*_{\cE}=\{\<(1,c, -g_{(a,b)}(1)-c^{3^4},-a, -b)_3\>:a,b,c\in\Ftf\}\cup \{S_\infty\},
\]
with $ g_{(a,b)}(t)$ as in (\ref{gpw}), is  a unital in the translation plane  $\Pi(\cS)$.

Consider the collineation of $\PG(20,3)$ defined as $\overline\varphi:\<(u,v,t,r,s)_3\>\mapsto 	\<(u,-t,v,r,s)_3\>$. Then, $\Pi(\cS)^{\overline\varphi}$ represents the Dickson commutative semifield plane  $\pi(\bD)$. It turns out that the set  
\[
\cU=\{(g_{(a,b)}(1)+c^{3^4},c,-a, -b):a,b,c\in\Ftf\}\cup \{(\infty)\}
\]
 is a unital in $\pi(\bD)$. 
 Note that $\cU$ cannot be a Buekenhout-Metz unital since $\pi(\bD)$ is a 10-dimensional translation plane over its kernel $\mathbb{F}_{3}$. On the other hand, as $\pi(\bD)$ admits unitary polarities \cite{gan2},  $\cU$ might be a polar unital.
The following result shows that this is not the case.

\begin{theorem}
The unital $\cU$  is not a polar unital in $\pi(\bD)$.
\end{theorem}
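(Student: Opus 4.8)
The plan is to isolate a combinatorial property shared by every polar unital and to show that $\cU$ violates it at a single point. Suppose $\cU$ were the set of absolute points of a unitary polarity $\sigma$ of $\pi(\bD)$, a plane of order $3^{10}=(3^5)^2$. For an absolute point $P\in\cU$ the polar line $P^\sigma$ passes through $P$; moreover, if another absolute point $R$ lay on $P^\sigma$, then $P\in R^\sigma$ and $R\in R^\sigma$ would force $P^\sigma=R^\sigma=PR$, contradicting the injectivity of $\sigma$. Hence $P^\sigma\cap\cU=\{P\}$, so $P^\sigma$ is exactly the tangent line to $\cU$ at $P$. Consequently, for a point $Q\notin\cU$ a tangent through $Q$ touches $\cU$ at $P$ precisely when $Q\in P^\sigma$, equivalently when $P\in Q^\sigma$; thus the feet of the $3^5+1$ tangent lines through $Q$ are exactly the points of $\cU\cap Q^\sigma$ and are therefore collinear, all lying on $Q^\sigma$. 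It follows that to prove the theorem it is enough to produce one point $Q\notin\cU$ whose feet are not collinear.

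To implement this I would first make the tangent lines of $\cU$ explicit. Writing an affine point as $(Y,X)$ with $Y,X\in\bD$, the line $[\infty]$ is the tangent at $(\infty)$, while every vertical line $[z]$ meets $\cU$ in $(\infty)$ together with the $3^5$ affine points with $X=z$ and is therefore a secant; so the tangent at an affine point $P=(Y_0,X_0)\in\cU$ with parameters $(a_0,b_0,c_0)$ must be one of the lines $[m,\,m*X_0+Y_0]$. Imposing that the incidence $m*X+Y=m*X_0+Y_0$, with $(Y,X)$ ranging over $\cU$, has $(a_0,b_0,c_0)$ as its only solution determines the slope $m=m(a_0,b_0,c_0)$ in terms of the Dickson product $*$ and of the functions $g_{(a,b)}$ and $h_{(a,b)}$ of (\ref{gpw})--(\ref{hpw}). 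I would then use the group generated by the collineations $\psi_{a,b}$ together with the translations $(Y,X)\mapsto(Y+(\delta^{3^4},\delta),X)$, which fix $(\infty)$ and stabilise $\cU$ while acting transitively (indeed regularly) on its affine points, to normalise the computation of tangents and to move the external point $Q$ into a convenient position.

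With $Q$ fixed, the feet are the affine points $P(a_0,b_0,c_0)\in\cU$ whose tangent passes through $Q$; substituting the slope $m(a_0,b_0,c_0)$ into the condition $Q\in[m,\,m*X_0+Y_0]$ produces the equations cutting out the $3^5+1$ feet, and the problem reduces to showing that these feet satisfy no common linear relation $m'*X+Y=k'$. This last step is the crux and the main obstacle: because the coordinate functions carry the Frobenius exponents $3^2$ and $3^4$, both the tangency condition and the resulting feet equations are intrinsically nonlinear, so neither collinearity nor its failure can be read off directly. I expect to settle it by choosing $Q$ so that among the feet one finds three points whose $(a,b)$-parameters are in general position, and then checking that no line $[m',k']$ of $\pi(\bD)$ can contain all three; this contradicts the collinearity forced by any unitary polarity and shows that $\cU$ is not a polar unital.
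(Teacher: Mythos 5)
Your combinatorial reduction is sound: for a unital arising from a unitary polarity $\sigma$, the tangent at an absolute point $P$ is $P^\sigma$, and the feet of the tangents through an external point $Q$ are exactly $\cU\cap Q^\sigma$, hence collinear; so exhibiting one external point with non-collinear feet would indeed rule out polarity. The gap is that you never execute this plan. You do not produce an explicit $Q$, you do not compute the slope function $m(a_0,b_0,c_0)$ of the tangent lines (which, as you concede, requires solving conditions that are nonlinear in the parameters because of the Frobenius twists $3^2$ and $3^4$ in $g_{(a,b)}$ and $h_{(a,b)}$), and you do not verify non-collinearity of any three feet; the phrase \enquote{I expect to settle it by choosing $Q$ so that\dots} is a hope, not an argument. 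There is also no guarantee built into your setup that a point with feet in \enquote{general position} exists or can be found without substantial machine computation, so as written the proposal is a strategy sketch with its crux missing.

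The paper's proof avoids all of this by exploiting a classification result: since the tangent line of $\cU$ at $(0,0)$ is $[0,0]$ (read off from the tangent space $E^*(0,0)$ of the egg), any putative polarity with absolute-point set $\cU$ must send $(0,0)\mapsto[0,0]$, and by \cite[Theorem 2.1]{hltw} every unitary polarity of $\pi(\bD)$ with this property belongs to the one-parameter family $\rho_a$, $a\in\Ftf\setminus\{0\}$. One then checks a single incidence: the point $(1,1,0,0)\in\cU$ does not lie on $\rho_a(1,1,0,0)=[a,-a,0,0]$ for any non-zero $a$, so $(1,1,0,0)$ is never absolute. If you want to salvage your approach, note that the tangent lines of $\cU$ are available structurally from the tangent spaces $E^*(a,b)$ of the egg (as in the proof of Theorem \ref{th_11}), which would spare you re-deriving tangency in semifield coordinates; but even then you would face an explicit nonlinear collinearity computation that the classification route in \cite{hltw} renders unnecessary.
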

\begin{proof}
Since the tangent space at the egg element $E(0,0)$ is $E^*(0,0)$, the tangent line of $\Pi(\cS)$ at the point $O=\<(1,0,0,0,0)_3\>\in\cB^*_{\cE}$ is the subspace spanned by $S(0,0)$ and $O$.  Then,  the tangent line of $\pi(\bD)$ at $(0,0)\in\cU$ is $[0,0]$.

 From \cite[Theorem 2.1]{hltw}, any unitary polarity of $\pi(\bD)$ mapping $(0,0)$ to $[0,0]$ is given by
\[
\begin{array}{cccc}
\rho_a: &(x_1,x_2,y_1,y_2) & \leftrightarrow &[a x_1,-a x_2,-y_1,y_2], \\
              & (m_1,m_2)& \leftrightarrow & (a^{-1}m_1,-a^{-1}m_2)\\
              & (\infty) &  \leftrightarrow & [\infty].
\end{array}
\]
for some non-zero $a\in\Ftf$.

The unital $\cU$ is a polar unital with respect to $\rho_a$,   for some $a\in\Ftf$,  if and only if each  of its points is an absolute point. Straightforward calculations show that the point  $(1,1,0,0)\in\cU$ is not incident with $\rho_a(1,1,0,0)=[a,-a ,0,0]$ for all non-zero $a\in\Ftf$, showing that $\cU$ is not a polar unital.
 \end{proof}


%

%
\comment{
\begin{remark}
By using the software package MAGMA \cite{cp} is it possible to check that the set  $\{\<(0,0,r,s)_3\>:r,s\in\Ftf, (r,s)\in\ker\,h(a,b)\}$ with 
\[
h(a,b):(r,s)\mapsto  -ar+b^{3^4}r^{3^4}+b^{3^2}s^{3^2}+a^{3^4}s^{3^4}
\]
is actually a $(4,121)$-cover of $S(0,0)$.
 \end{remark}

The software package MAGMA \cite{cp} was used to check that the above arguments do not apply to the Dickson commutative semifield plane of order $3^d$,  for $d=3$ and $4$, since the set $\cS$ is not a spread of the corresponding projective space. 

\begin{remark}
It makes sense to ask if similar arguments apply to the of Cohen-Ganley  good eggs.  Let  $p=3$ and $\xi$ be a non-square in $\Ftd$. For any pair $(a,b)\in\Ftd^2$, we consider the  map
\[
\begin{array}{cccc}
\tau_{(a,b)}: & (x,y) & \mapsto & (bx+ay,-ax+\xi by+(\xi^{-1}b)^{3^{d-2}}y^{3^{d-2}}).
\end{array}
\]

Straightforward calculations show that there exist some pairs $(a,b)$ such that $\tau_{(a,b)}$ is singular. This implies that the set $\{\tau_{(a,b)}:a,b\in\Fqm\}$ is not a spread set of $V(2d,q)$.
\end{remark}
}

%

\end{document}